\newcommand{\paren}[1]{\left(#1\right)}
\title[The automorphism group]{The automorphism group of a certain unbounded  non-hyperbolic domain}
\author{Hyeseon Kim, Ninh Van Thu and Atsushi Yamamori}
\address{Center for Geometry and its Applications, Pohang University of Science and Technology,
Pohang 790-784, Republic of Korea}
\email{hop222@snu.ac.kr, hyeseon@postech.ac.kr}
\address{Center for Geometry and its Applications, Pohang University of Science and Technology,
Pohang 790-784, Republic of Korea}
\email{thunv@postech.ac.kr}
\address{Center for Geometry and its Applications, Pohang University of Science and Technology,
Pohang 790-784, Republic of Korea}
\email{yamamori@postech.ac.kr}
\subjclass[2000]{Primary 32M05; Secondary 32A25, 32A07}
\keywords{Automorphism group, Bergman kernel, Cartan theorem}
\thanks{The research of the authors was supported by an NRF grant 2011-0030044(SRC-GAIA) of the Ministry of Education, The Republic of Korea}
\theoremstyle{definition}
\newtheorem{theorem}{Theorem}
\newtheorem{lemma}[theorem]{Lemma}
\newtheorem*{remark}{Remark}
\begin{document}
\begin{abstract}
In this paper we determine the automorphism group of the Fock-Bargmann-Hartogs domain $D_{n,m}$ in $\mathbb{C}^n\times\mathbb{C}^m$ which is defined by the inequality ${\|\zeta\|}^2<e^{-\mu{\|z\|}^2}$.
\end{abstract}

\maketitle

\section{Introduction}
For a complex manifold, the automorphism group is the set of all biholomorphic self maps that forms a group under the composition law. We denote by $\mathrm{Aut}(M)$ the automorphism group of a complex manifold $M$. For a domain in $\mathbb{C}^n~(n\geq2)$, the automorphism group is not easy to describe explicitly. For the bounded case, the automorphism groups of various domains are given in~\cite{ABP, Shimizu,Sunada}. The descriptions of the automorphism groups of hyperbolic domains are also given in ~\cite{Isaev, Isaev2, KTKim}. In particular, the methodology in~\cite{Isaev2} is using of the well-known minimal dimensions of irreducible, faithful representations of complex simple Lie algebras. The purpose of this article is to describe the $\mathrm{Aut}(D_{n,m})$ for the Fock-Bargmann-Hartogs domain $D_{n,m}$ which is defined by 
\begin{equation*}
D_{n,m}:=\{(z,\zeta)\in\mathbb{C}^n\times\mathbb{C}^m;\;{\|\zeta\|}^2<e^{-\mu{\|z\|}^2}\},\quad\mu>0.
\end{equation*}The main feature of this domain is that it is an unbounded strongly pseudoconvex domain which is not hyperbolic in the sense of Kobayashi. We note that, if $m=1$, then $D_{n,1}\backslash\{(z,0)\}$ especially reduces to an example in \cite{Isaev}. The arguments in this paper rely on the Cartan's theorem by using the Bergman representative mapping in~\cite{Ishi} and an explicit form of the Bergman kernel function for $D_{n,m}$ in~\cite{Y}. The celebrated Cartan's theorem is engaged with the bounded circular domains. Even if our domain $D_{n,m}$ is unbounded, the positivity of the Bergman kernel of $D_{n,m}$ and the positive definiteness of the Bergman metric at the origin, ensure the Cartan's theorem. In general, it is not easy to compute the Bergman kernel function for an unbounded domain without determining the complete orthonormal basis for the Bergman space. The polylogarithm function plays a crucial role in determining the Bergman kernel function of $D_{n,m}$~(see~Section~\ref{sec:FBH}).

In Section~\ref{sec:Cartan} we will describe the Cartan's theorem based on the Bergman representative mapping in spirit to \cite{Ishi}. Then we will investigate the Fock-Bargmann-Hartogs domain in Section~\ref{sec:FBH}; namely, polylogarithm function on $D_{n,m}$ and the Bergman kernel of $D_{n,m}$. In Section~\ref{sec:Auto}, we will determine the $\mathrm{Aut}(D_{n,m})$ as a main result.

\section{Cartan's Theorem Revisited}\label{sec:Cartan}
This section is devoted to the study of a celebrated  theorem of Cartan stating that any automorphism $f$ of bounded circular domain with $f(0)=0$ must be linear.
It is well-known that this theorem also holds if the domain is hyperbolic (cf.~\cite[Cor.~5.5.2]{Kob}).
However, in general, this theorem does not hold for unbounded case (cf.~\cite[2.1.4.~Examples]{R}).
The purpose of this section is to prove that two conditions on the Bergman kernel \eqref{B} and \eqref{T} imply Cartan's Theorem.
We will see in the next section that a certain unbounded non-hyperbolic domain also satisfies these conditions.

Let $D\subset \mathbb C^N$ be a circular domain (not necessarily bounded) with the Bergman kernel $K_D$. We suppose that the domain $D$ contains the origin.
Throughout this section we assume the following conditions.
\def\theenumi{\roman{enumi}}
\begin{enumerate}
\item $K_D(0,0)>0$,\label{B}
\item $T_D(0,0)$ is positive definite. \label{T}
\end{enumerate}
Here $T_D(z,w)$ is an $N\times N$ Hermitian matrix defined by
$$T_D (z,w):=
\begin{pmatrix} 
\dfrac{\partial^2 }{\partial \overline{w_1}\partial z_1}\log K_D(z,w) & \cdots & \dfrac{\partial^2 }{\partial \overline{w_1}\partial z_N}\log K_D(z,w)
\\ \vdots & \ddots & \vdots\\
\dfrac{\partial^2 }{\partial \overline{w_N}\partial z_1}\log K_D(z,w) & \cdots & \dfrac{\partial^2 }{\partial\overline{ w_N}\partial z_N}\log K_D(z,w)
\end{pmatrix}.$$
We note that the above conditions are always checked if $D$ is bounded.
Ishi and Kai \cite{Ishi} proved Cartan's theorem by using the notion of the Bergman representative mapping.
They considered bounded circular cases. However their proof is also applicable for an unbounded domain whenever its Bergman kernel has the properties \eqref{B} and \eqref{T}.
The proof proceeds along the same lines as that of \cite{Ishi}.
But for the convenience of the reader and in order to make clear the role of conditions \eqref{B} and \eqref{T}, we give an outline of the proof.

Let $\varphi: D\rightarrow D'$ be a biholomorphism onto a domain $D'$. 
We start with the following transformations laws under biholomorphisms:
\begin{align}
K_D(z,w)&=\overline{\det J(\varphi, w)} K_{D'}(\varphi(z) ,\varphi (w)) \det J(\varphi, z)  ,\label{eq:B}\\
T_D(z,w)&=\overline{ {}^t J(\varphi, w)} T_{D'}(\varphi(z) ,\varphi (w))  J(\varphi, z), \quad \mbox{if $K_D(z,w)\not=0$} \label{eq:T}.
\end{align}
Here $J(\varphi,z)$ is the Jacobian matrix of $\varphi = {}^t (\varphi_1, \ldots, \varphi_N)$ at $z$:
$$J_D (\varphi,z):=
\begin{pmatrix} 
\dfrac{\partial \varphi_1}{\partial z_1}(z)& \cdots & \dfrac{\partial \varphi_1}{\partial z_N}(z)
\\ \vdots & \ddots & \vdots\\
\dfrac{\partial \varphi_N}{\partial z_1}(z) & \cdots &\dfrac{\partial \varphi_N }{\partial z_N}(z) 
\end{pmatrix}.$$
The first formula is well-known. For the proof of the second formula, see \cite{Tuboi}.
Before introducing the Bergman representative mapping $\sigma_0^D$, we need the following lemma which ensures the well-definedness of $\sigma_0^D$.
\begin{lemma}[\mbox{\cite[Lemma 2.5]{Ishi}}] \label{lem1}
Let $D$ be a circular domain (not necessarily bounded) with conditions \eqref{B}  and \eqref{T}. Then we have the followings:
\quad\\
(a) The Bergman kernel $K_D(z,0)$ is a non-zero constant.\\
(b) The matrix $T_D(z,0)$ is a constant matrix, so that it equals the positive definite Hermitian matrix $T_D(0,0)$. 
\end{lemma}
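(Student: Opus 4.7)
The plan is to exploit the circularity of $D$: for every $\theta\in\mathbb{R}$, the rotation $\rho_\theta(z)=e^{i\theta}z$ is an automorphism of $D$. I would then feed $\rho_\theta$ into the two transformation laws \eqref{eq:B} and \eqref{eq:T} and extract rotation invariance of $K_D(\cdot,0)$ and $T_D(\cdot,0)$, and finally combine this invariance with holomorphy and the hypotheses \eqref{B}, \eqref{T} to conclude constancy.

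For part (a), I would first compute $J(\rho_\theta,z)=e^{i\theta}I_N$, hence $\det J(\rho_\theta,z)=e^{iN\theta}$. Plugging $\varphi=\rho_\theta$ and $w=0$ into \eqref{eq:B}, the phase factors $\overline{\det J(\rho_\theta,0)}$ and $\det J(\rho_\theta,z)$ cancel, leaving
\begin{equation*}
K_D(z,0)=K_D(e^{i\theta}z,0)\qquad\text{for all }z\in D,\ \theta\in\mathbb{R}.
\end{equation*}
Since $K_D(\cdot,0)$ is holomorphic on $D$ and $0\in D$, it admits a power series expansion $\sum_\alpha c_\alpha z^\alpha$ on a neighborhood of the origin; the rotation invariance forces $e^{i|\alpha|\theta}c_\alpha=c_\alpha$ for all $\theta$, so $c_\alpha=0$ whenever $|\alpha|>0$. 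Hence $K_D(z,0)\equiv c_0=K_D(0,0)$ near $0$, and by the identity principle on the connected domain $D$, everywhere on $D$. Condition \eqref{B} then guarantees this constant is non-zero.

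For part (b), the conclusion of (a) ensures $K_D(z,0)\neq 0$ for every $z\in D$, so \eqref{eq:T} is applicable with $\varphi=\rho_\theta$ and $w=0$. Since $J(\rho_\theta,z)=e^{i\theta}I_N$ and $\overline{{}^tJ(\rho_\theta,0)}=e^{-i\theta}I_N$, the scalar factors again cancel and we obtain
\begin{equation*}
T_D(z,0)=T_D(e^{i\theta}z,0)\qquad\text{for all }z\in D,\ \theta\in\mathbb{R}.
\end{equation*}
Each entry of $T_D(z,0)$ is holomorphic in $z$ (it arises from $\partial_{\bar w_j}\partial_{z_i}\log K_D(z,w)$ evaluated at $w=0$, and the sesqui-analytic expansion of $\log K_D$ about the origin, valid by (a), makes the $z$-dependence holomorphic). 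The same power series argument as in (a), applied entry by entry, shows $T_D(z,0)\equiv T_D(0,0)$, which is positive definite by \eqref{T}.

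The only step that requires a moment of care is recording that entries of $T_D(\cdot,0)$ really are holomorphic, which relies on (a) so that $\log K_D$ is well-defined and sesqui-analytic in a neighborhood of $\{(z,0):z\in D\}$; otherwise the argument is a direct, two-line reading of the transformation laws against the circularity hypothesis.
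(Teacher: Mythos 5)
Your proof is correct and takes essentially the same route the paper intends: the paper only sketches this lemma (deferring to Ishi--Kai), but the ingredients it names --- circularity exploited through the rotations $\rho_\theta$, the transformation laws \eqref{eq:B} and \eqref{eq:T}, and the conditions \eqref{B}, \eqref{T} --- are exactly what you flesh out. Your explicit remark that part (a) is needed before part (b), so that $\log K_D$ and hence $T_D(\cdot,0)$ are well defined and holomorphic in $z$, is the right point of care and matches the logical order of the original argument.
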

For the proof of the assertion (a), we use the condition \eqref{B} and the formula \eqref{eq:B}.
For the proof of the assertion (b), we use the condition \eqref{T} and the formula \eqref{eq:T}.

This lemma gives us the well-definedness of the Bergman representative mapping $\sigma_0^D: D \rightarrow \mathbb C^N$:
\begin{align*}
\left. \sigma_0^D (z):= T_D(0,0)^{-1/2} \mbox{grad}_{\overline{w}} \log \dfrac{K_D (z,w)}{K_D(0,w)} \right|_{w=0}.
\end{align*}
Here we define
$$\mbox{grad}_{\overline{w}}  f(w) := {}^{t} \left( \dfrac{\partial f}{\partial \overline{w_1}} (w), \ldots,
\dfrac{\partial f}{\partial \overline{w_N}}(w)  \right),$$
for anti-holomorphic functions $f (w )$ on $D$. A crucial property of the Bergman representative mapping is that 
$\sigma_0^D$ is a linear mapping when $D$ is circular. Indeed we have
\begin{lemma}[\mbox{\cite[Proposition 2.6 (1)]{Ishi}}]\label{lem:linear}
Let $D$ be a circular domain (not necessarily bounded) with conditions \eqref{B} and \eqref{T}.
Then we have $\sigma_0^D (z) = T_D(0,0)^{1/2} z$. 
\end{lemma}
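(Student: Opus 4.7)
The plan is to combine the circularity of $D$ with Lemma \ref{lem1} and reduce $\sigma_0^D$ to a direct Taylor-coefficient computation at the origin. By Lemma \ref{lem1}(a) the function $z \mapsto K_D(z,0)$ is the nonzero constant $K_D(0,0)$, and conjugate symmetry of the Bergman kernel then gives $K_D(0,w) \equiv K_D(0,0)$ as well. Thus $\log(K_D(z,w)/K_D(0,w))$ is well-defined near $w=0$ and its anti-holomorphic gradient reduces to
\begin{align*}
\mbox{grad}_{\overline w} \log\frac{K_D(z,w)}{K_D(0,w)}\bigg|_{w=0} = \frac{1}{K_D(0,0)}\, \mbox{grad}_{\overline w} K_D(z,w)\bigg|_{w=0}.
\end{align*}

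Next I show that the right-hand side is linear in $z$. For each $\theta \in \mathbb{R}$, the rotation $r_\theta(z) = e^{i\theta} z$ is an automorphism of $D$ with Jacobian $e^{i\theta} I_N$, so applying \eqref{eq:B} and cancelling the scalar determinants gives the invariance $K_D(z,w) = K_D(e^{i\theta}z, e^{i\theta}w)$. Differentiating in $\overline{w_j}$ and setting $w = 0$ shows that $F_j(z) := \partial K_D(z,w)/\partial \overline{w_j}\big|_{w=0}$ satisfies $F_j(e^{i\theta}z) = e^{i\theta} F_j(z)$ for every $\theta$. Since $F_j$ is holomorphic on $D$, every nonzero monomial in its Taylor expansion at the origin must be of degree one, so there is an $N \times N$ constant matrix $A$ with $\mbox{grad}_{\overline w} K_D(z,w)|_{w=0} = A z$.

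It remains to identify $A$ with $K_D(0,0)\, T_D(0,0)$. Because $K_D(z,0)$ is constant, the derivative $\partial K_D/\partial z_i$ vanishes at the origin, so the quotient-rule cross term in $\partial^2 \log K_D / \partial \overline{w_j} \partial z_i$ drops out and
\begin{align*}
T_D(0,0)_{ji} = \frac{1}{K_D(0,0)}\, \frac{\partial^2 K_D(z,w)}{\partial \overline{w_j} \partial z_i}\bigg|_{z=w=0} = \frac{A_{ji}}{K_D(0,0)}.
\end{align*}
Substituting $A = K_D(0,0)\, T_D(0,0)$ into the previous displays and multiplying by $T_D(0,0)^{-1/2}$ yields $\sigma_0^D(z) = T_D(0,0)^{1/2} z$. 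The only subtlety is the homogeneity step: the holomorphicity of $F_j$ together with the $S^1$-equivariance forces strict polynomial homogeneity of degree one, and it is precisely this fact which makes $\sigma_0^D$ linear.
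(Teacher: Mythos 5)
Your argument is correct, and it is worth noting that the paper itself gives no proof of this lemma at all---it is quoted verbatim from Ishi--Kai \cite{Ishi}---so your write-up is a genuine addition rather than a paraphrase. Each step checks out: Lemma \ref{lem1}(a) plus the Hermitian symmetry $K_D(0,w)=\overline{K_D(w,0)}=K_D(0,0)$ reduces the gradient of the logarithm to $K_D(0,0)^{-1}\,\mbox{grad}_{\overline w}K_D(z,w)|_{w=0}$; the rotational invariance $K_D(z,w)=K_D(e^{i\theta}z,e^{i\theta}w)$ (from \eqref{eq:B} with the scalar Jacobians cancelling) gives the equivariance $F_j(e^{i\theta}z)=e^{i\theta}F_j(z)$, which kills every Taylor monomial except the linear ones; and the identification $A=K_D(0,0)\,T_D(0,0)$ is right because the quotient-rule cross term $(\partial_{z_i}K_D)(\partial_{\overline{w_j}}K_D)/K_D^2$ vanishes at the origin, again by Lemma \ref{lem1}(a). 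Your route differs slightly in packaging from the one the cited source (and the architecture of Section \ref{sec:Cartan}) suggests: there, the $S^1$-equivariance is applied to $\sigma_0^D$ itself via Lemma \ref{lem:comm} with $\varphi=r_\theta$, $L(r_\theta,0)=e^{i\theta}I_N$, forcing $\sigma_0^D$ to be homogeneous of degree one, after which its differential at $0$ is computed to be $T_D(0,0)^{-1/2}T_D(0,0)$. Applying the equivariance directly to $\mbox{grad}_{\overline w}K_D(z,w)|_{w=0}$ as you do is more self-contained (it does not presuppose Lemma \ref{lem:comm}) at the cost of redoing the small second-derivative computation by hand; both arguments rest on the same mechanism, namely that circularity plus holomorphy forces strict degree-one homogeneity.
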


Let $\varphi: D\rightarrow D$ be an automorphism of a circular domain $D$ with $\varphi(0)=0$.
We define a unitary matrix 
$$L(\varphi,0)=T_D(0,0)^{-1/2}  \overline{{}^{t}  J(\varphi,0) }^{-1} T_D(0,0)^{1/2}.$$
For our purpose, we need one more lemma.
\begin{lemma}[\mbox{\cite[Proposition 2.1]{Ishi}}]\label{lem:comm}
Let $\varphi$  be an automorphism of a circular domain $D$ (not necessarily bounded) with conditions \eqref{B}, \eqref{T} and $\varphi(0)=0$.
Then one has $\sigma_0^D \circ \varphi = L(\varphi,0)\circ \sigma_0^D$.
In other words, the following diagram is commutative.
$$\xymatrix{
D \ar[d]_{\sigma_0^D} \ar[r]^\varphi_\sim \ar@{}[dr]|\circlearrowright & D \ar[d]^{\sigma_0^D} \\
\mathbb C^N \ar[r]^{L(\varphi,0)} & \mathbb C^N \\
}$$
\end{lemma}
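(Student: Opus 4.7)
The plan is to apply the transformation law \eqref{eq:B} to the self-biholomorphism $\varphi$ and to extract the claimed commutation relation by taking the antiholomorphic gradient in the second variable and evaluating at the origin. Before doing so, I would simplify $\sigma_0^D$ using Lemma~\ref{lem1}(a): since $K_D(0,w)=\overline{K_D(w,0)}=\overline{K_D(0,0)}=K_D(0,0)$ is a nonzero real constant, $\log K_D(0,w)$ is independent of $w$, so
$$\sigma_0^D(z) = T_D(0,0)^{-1/2}\,\mathrm{grad}_{\overline{w}}\log K_D(z,w)\big|_{w=0}.$$
In particular $\sigma_0^D(0)=0$, a fact that I will use at the very end to eliminate an integration constant.

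Next, I would apply \eqref{eq:B} to $\varphi:D\to D$ and substitute $w\mapsto \varphi^{-1}(w)$ in the second slot. This gives
$$K_D(\varphi(z),w) = \frac{\overline{\det J(\varphi^{-1},w)}}{\det J(\varphi,z)}\,K_D(z,\varphi^{-1}(w)).$$
Taking $\log$ and then $\partial/\partial\overline{w_j}$, the factor $\det J(\varphi,z)$ is holomorphic in $w$ and therefore disappears. The remaining contribution splits into a $z$-independent piece from $\overline{\det J(\varphi^{-1},w)}$ and a chain-rule piece from $K_D(z,\varphi^{-1}(w))$. Evaluating the chain-rule piece at $w=0$, using $\varphi^{-1}(0)=0$ and $J(\varphi^{-1},0)=J(\varphi,0)^{-1}$, yields
$$\mathrm{grad}_{\overline{w}}\log K_D(z,\varphi^{-1}(w))\big|_{w=0} = \overline{{}^tJ(\varphi,0)}^{-1}\,\mathrm{grad}_{\overline{u}}\log K_D(z,u)\big|_{u=0}.$$

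Multiplying through by $T_D(0,0)^{-1/2}$ and recognising the conjugated product $T_D(0,0)^{-1/2}\,\overline{{}^tJ(\varphi,0)}^{-1}\,T_D(0,0)^{1/2}$ as $L(\varphi,0)$, I obtain
$$\sigma_0^D(\varphi(z)) = c(\varphi) + L(\varphi,0)\,\sigma_0^D(z),$$
where $c(\varphi)\in\mathbb{C}^N$ is the $z$-independent vector coming from $\overline{\det J(\varphi^{-1},w)}$. Setting $z=0$ and using $\varphi(0)=0$ together with $\sigma_0^D(0)=0$ forces $c(\varphi)=0$, which gives the asserted commutativity.

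The main obstacle is not conceptual but notational: the antiholomorphic chain rule introduces $\overline{{}^tJ(\varphi^{-1},w)}$ rather than $J(\varphi^{-1},w)$, and it is precisely this combination — together with the conjugation by $T_D(0,0)^{1/2}$ — that reproduces the definition of $L(\varphi,0)$. Getting the transposes and complex conjugates lined up correctly is the delicate part of the argument; everything else is a direct consequence of the transformation law \eqref{eq:B}, Lemma~\ref{lem1}(a), and the chain rule.
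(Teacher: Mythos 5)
Your proof is correct: the paper gives no proof of this lemma (it is quoted from Ishi--Kai), and your argument via the transformation law \eqref{eq:B}, the antiholomorphic chain rule at $w=0$ (using $\varphi^{-1}(0)=0$ and $J(\varphi^{-1},0)=J(\varphi,0)^{-1}$), and the normalization $\sigma_0^D(0)=0$ to kill the constant from $\overline{\det J(\varphi^{-1},w)}$ is exactly the standard argument underlying the cited Proposition~2.1. All transposes and conjugates line up with the paper's definition of $L(\varphi,0)$, so nothing is missing.
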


Now we can prove Cartan's Theorem.
\begin{theorem}\label{th:CUT}
Let $D$ be a circular domain (not necessarily bounded) with conditions \eqref{B} and \eqref{T}.
If $\varphi \in \mbox{Aut}(D)$ and $\varphi (0)=0$, then $\varphi$ is linear.
\end{theorem}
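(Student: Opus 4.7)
The plan is to combine the two key properties of the Bergman representative mapping established in Lemmas~\ref{lem:linear} and~\ref{lem:comm}, and then invert the linear isomorphism $T_D(0,0)^{1/2}$ to read off $\varphi$ explicitly.

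First, I would record the commutative diagram of Lemma~\ref{lem:comm} as the identity
\[
\sigma_0^D(\varphi(z)) = L(\varphi,0)\,\sigma_0^D(z) \qquad (z\in D).
\]
Next, I would substitute the explicit formula from Lemma~\ref{lem:linear}, namely $\sigma_0^D(z) = T_D(0,0)^{1/2} z$, into both sides. This is legitimate because condition \eqref{T} guarantees that $T_D(0,0)$ is positive definite, so $T_D(0,0)^{1/2}$ is a well-defined invertible Hermitian matrix, and Lemma~\ref{lem:linear} applies to $D$ itself (the target of $\varphi$ being the same circular domain).

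After the substitution, the identity reads
\[
T_D(0,0)^{1/2}\,\varphi(z) = L(\varphi,0)\,T_D(0,0)^{1/2} z.
\]
Multiplying on the left by $T_D(0,0)^{-1/2}$ yields
\[
\varphi(z) = T_D(0,0)^{-1/2}\, L(\varphi,0)\, T_D(0,0)^{1/2}\, z,
\]
which exhibits $\varphi$ as the linear map $z\mapsto Az$ with $A := T_D(0,0)^{-1/2}L(\varphi,0)T_D(0,0)^{1/2}$. This completes the proof.

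There is no real obstacle here beyond the bookkeeping in the preceding lemmas: all the work is hidden inside Lemmas~\ref{lem1}, \ref{lem:linear}, and~\ref{lem:comm}, where conditions \eqref{B} and \eqref{T} were used to make $\sigma_0^D$ well-defined and to identify it with the linear map $z\mapsto T_D(0,0)^{1/2}z$. Once those are in hand, the argument is a one-line manipulation using the invertibility of $T_D(0,0)^{1/2}$. The only subtlety worth remarking on is that the unitarity of $L(\varphi,0)$ is not needed for linearity itself; it would only be needed if one wanted to conclude further that $\varphi$ is a unitary transformation, which requires comparing the images $\sigma_0^D(D)$ on both sides.
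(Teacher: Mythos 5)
Your proposal is correct and follows exactly the paper's own argument: combine Lemma~\ref{lem:comm} with the explicit linear formula of Lemma~\ref{lem:linear}, then cancel $T_D(0,0)^{1/2}$ to obtain $\varphi(z)=T_D(0,0)^{-1/2}L(\varphi,0)T_D(0,0)^{1/2}z$. The paper's proof is this same one-line manipulation, so there is nothing to add.
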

\begin{proof}
By Lemma \ref{lem:linear} and Lemma \ref{lem:comm}, we have
$\varphi(z)= T_D(0,0)^{-1/2} L(\varphi,0) T_D(0,0)^{1/2} z$.
It is obviously linear.
\end{proof}

\section{Fock-Bargmann-Hartogs domain}\label{sec:FBH}
We give an example of unbounded non-hyperbolic circular domain which satisfies \eqref{B} and \eqref{T}.
In this section we consider the following Hartogs domain:
$$D_{n,m}:=\{ (z,\zeta)\in \mathbb C^n \times \mathbb C^m; \|\zeta\|^2 < e^{-\mu  \|z\|^2 } \}, \quad \mu>0,$$
which is called the Fock-Bargmann-Hartogs domain in \cite{Y}.
We note that $D_{n,m}$ contains $\{ (z,0) \in \mathbb C^n  \times \mathbb C^m \}  \cong \mathbb C^n$.
Thus there does not exist a bounded domain $D$ such that $D_{n,m}$ is biholomorphic to $D$ and it is not hyperbolic.
The aim of this section is to verify the conditions \eqref{B} and \eqref{T}  for $D_{n,m} $ by using an explicit form of $K_{D_{n,m}}$.
\subsection{Polylogarithm function}
An explicit form of the Bergman kernel of $D_{n,m}$ is expressed in terms of the polylogarithm function.
We introduce this function here.

Recall that the logarithm has the following series expansion:
$$-\log (1-t) =\sum_{k=1}^\infty \dfrac{t^k}{k}, \quad |t|<1.$$
The polylogarithm $Li_{s}(t)$ is defined as a natural generalization of the right hand side:
$$ Li_{s} (t)= \sum_{k=1}^\infty \dfrac{t^k}{k^s}, \quad |t|<1, s\in \mathbb C.$$
An important fact about $Li_s(t)$ is that it is a rational function of $t$ when the number $s$ is a negative integer.
Actually it is verified by $Li_0(t)=t/(1-t)$ and $\frac{d}{dt}Li_s(t)= Li_{s-1}(t)/t$.
The first few are given by
\begin{align*}
&Li_{-1}(t)=\dfrac{t}{(1-t) ^2};\quad Li_{-2}(t)=\dfrac{t^2 +t}{(1-t) ^3}; \quad 
Li_{-3}(t)=\dfrac{t^3 + 4t^2 +t}{(1-t) ^4};\\ 
&Li_{-4}(t)=\dfrac{t^4+ 11t^3 + 11t^2 +t}{(1-t) ^5};\quad Li_{-5}(t)= \dfrac{t^5 + 26 t^4 + 66 t^3 + 26 t^2 + t}{(1 - t)^6}.
\end{align*}
For any negative integer $s=-n$, the following closed form of $Li_{-n}(t)$ is known \cite[eq. 2.10c]{Cvi}:
\begin{align}\label{polylog}
Li_{-n} (t) = \sum_{j=0}^{n}\frac{ (-1)^{n+j}  j! S(1+n,1+j)   }{(1-t)^{j+1}}  ,
\end{align}
where $S(\cdot, \cdot)$ denotes the Stirling number of the second kind.
By using this expression \eqref{polylog}, we know that the $m$-th derivative of the polylogarithm has a form:
\begin{align}\label{Anm}
\dfrac{d^m}{dt^m} Li_{-n} (t) = \dfrac{A_{n,m}(t)  }{(1-t)^{n+m+1}  } ,
\end{align}
where $A_{n,m}(t)$ is given by
$$A_{n,m}(t) = m!\sum_{j=0}^{n} (-1)^{n+j}  (m+1)_j S(1+n,1+j)(1-t)^{n-j}.$$
Here $(x)_n$ is the Pochhammer symbol.
The positivity of the coefficients is known \cite[Lemma 4.3]{Y}:
\begin{lemma}\label{lem:positive}
All coefficients of $A_{n,m}(t) $ are positive.
\end{lemma}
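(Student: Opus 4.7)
The plan is to set up a recursion for $A_{n,m}$ in $m$ by differentiating the defining identity \eqref{Anm}, and then argue by induction.

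Differentiating $\dfrac{d^m}{dt^m}Li_{-n}(t) = \dfrac{A_{n,m}(t)}{(1-t)^{n+m+1}}$ once more in $t$ and clearing the common factor of $(1-t)^{n+m+1}$ yields the recursion
$$A_{n,m+1}(t) = (1-t)\,A_{n,m}'(t) + (n+m+1)\,A_{n,m}(t).$$
A quick check from \eqref{Anm} (or from this recursion together with $\deg A_{n,0}=n$) shows $\deg A_{n,m}\le n$, so writing $A_{n,m}(t)=\sum_{k=0}^{n} a_{k}^{(m)} t^k$, the recursion becomes
$$a_{k}^{(m+1)} = (k+1)\,a_{k+1}^{(m)} + (n+m+1-k)\,a_{k}^{(m)}, \qquad 0\le k\le n,$$
with the convention $a_{n+1}^{(m)}=0$. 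Both multipliers are strictly positive for $0\le k\le n$ and $m\ge 0$ (note $n+m+1-k\ge m+1$), so nonnegativity of the $a_k^{(m)}$ propagates to the $a_k^{(m+1)}$, and any strict positivity propagates downward in $k$.

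It therefore suffices to handle the base case $m=0$. Here $A_{n,0}(t)$ is, up to an index shift, the classical Eulerian polynomial obtained from $(1-t)^{n+1}\sum_{k\ge 1}k^n t^k=A_{n,0}(t)$, whose coefficients are (nonnegative) Eulerian numbers. For a self-contained argument I would instead induct on $n$ using the identity $Li_{-(n+1)}(t)=t\,\tfrac{d}{dt}Li_{-n}(t)$, which yields
$$A_{n+1,0}(t) = t(1-t)\,A_{n,0}'(t) + (n+1)\,t\,A_{n,0}(t).$$
Extracting coefficients gives, with $b_k^{(n)}:=[t^k]A_{n,0}(t)$,
$$b_k^{(n+1)} = k\,b_k^{(n)} + (n+2-k)\,b_{k-1}^{(n)}, \qquad 1\le k\le n+1,$$
once again with strictly positive multipliers; the trivial base $A_{0,0}(t)=t$ (from $Li_0(t)=t/(1-t)$) closes this sub-induction.

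The main obstacle I anticipate is essentially bookkeeping: one must consistently adopt the convention $a_{n+1}^{(m)}=0$ and keep careful track of the degree bound $\deg A_{n,m}\le n$ so that no spurious term spoils positivity when extracting coefficients. A minor interpretive point is that the statement ``all coefficients are positive'' should be read as ``all nonzero coefficients are positive,'' since, for example, $A_{1,0}(t)=t$ has vanishing constant term; the induction above gives exactly this conclusion.
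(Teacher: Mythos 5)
The paper does not actually prove this lemma; it is imported verbatim from \cite{Y} (cited there as Lemma 4.3), so there is no in-paper argument to compare against, and your proposal supplies a genuine self-contained proof. The proof is correct. Both recursions check out: differentiating \eqref{Anm} gives $A_{n,m+1}=(1-t)A_{n,m}'+(n+m+1)A_{n,m}$, hence $a_k^{(m+1)}=(k+1)a_{k+1}^{(m)}+(n+m+1-k)a_k^{(m)}$ with $n+m+1-k\ge m+1>0$ for $k\le n$, and the identity $Li_{-(n+1)}(t)=t\,\frac{d}{dt}Li_{-n}(t)$ gives the classical Eulerian recurrence $b_k^{(n+1)}=k\,b_k^{(n)}+(n+2-k)\,b_{k-1}^{(n)}$ for the base case $m=0$; the degree bound $\deg A_{n,m}\le n$ is immediate from the closed form in \eqref{Anm}. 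One point you should not leave as a mere ``interpretive'' aside: the paper invokes this lemma precisely to get $A_{n,m}(0)>0$, i.e.\ strict positivity of the \emph{constant} coefficient, so the weakened reading ``all nonzero coefficients are positive'' would not suffice as stated. Fortunately your own recursion delivers the needed strengthening for the only relevant range $m\ge 1$: since $a_0^{(0)}=0$ and $a_1^{(0)}>0$ for $n\ge1$, one gets $a_0^{(1)}=a_1^{(0)}>0$, and then every $a_k^{(m)}$ with $0\le k\le n$ and $m\ge1$ is strictly positive because $a_k^{(m+1)}\ge(n+m+1-k)a_k^{(m)}$. You should record that conclusion explicitly (the vanishing constant term occurs only for $m=0$, which never arises for $D_{n,m}$ with $m\ge1$).
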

\subsection{Bergman kernel}
The Bergman kernel of $D_{n,m}$ is computed in \cite{Y}:
\begin{align*}
K_{D_{n,m}}((z,\zeta),(z',\zeta'))=\dfrac{\mu^n e^{m\mu\langle z,z' \rangle} }{\pi^{n+m}} \dfrac{d^m}{dt^m} Li_{-n} (t)|_{t= e^{\mu\langle z,z' \rangle} \langle \zeta,\zeta' \rangle }.
\end{align*}
We remark that the Bergman kernel of $D_{1,1}$ was computed explicitly by G. Springer~\cite{Springer}.

Let us check the condition \eqref{B} and \eqref{T}.
For the condition \eqref{B}, it is enough to check $\frac{d^m}{dt^m} Li_{-n} (t)|_{t=0} >0$.
By \eqref{Anm}, it is equivalent to $A_{n,m}(0)>0$.
As explained in Lemma \ref{lem:positive}, all coefficients of $A_{n,m}(t) $ are positive.
Thus we know that $K_{D_{n,m}}(0,0)>0$.

Next we check the condition \eqref{T}. For the condition \eqref{T} we need the following lemma.
\begin{lemma}\label{lem:hess}
\noindent
\quad\\
$(1)$ The complex Hessian of $\log e^{m \mu \|z\|^2 } -(n+m+1) \log (1-  e^{\mu \|z\|^2}  \|\zeta \|^2 )$ at the origin is positive definite.\\
$(2)$ If $P$ is a real polynomial such that $P(0)\neq0$ and $P(0)P'(0)>0$, then the complex Hessian of $\log P(e^{\mu \|z\|^2}  \|\zeta \|^2)$ at the origin is positive semi-definite.
\end{lemma}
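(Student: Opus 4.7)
The plan is to reduce both parts to a single chain-rule identity evaluated at the origin. Introduce the auxiliary real-valued function $u(z,\zeta) := e^{\mu\|z\|^2}\|\zeta\|^2$, which is the common argument of the logarithms in question. The first thing I would verify is that $u(0,0) = 0$ and, more importantly, that every first complex partial derivative of $u$ vanishes at the origin: differentiating in $z_i$ (resp.\ $\zeta_\alpha$) produces a factor of $\bar z_i\|\zeta\|^2$ (resp.\ $\bar\zeta_\alpha$), both of which vanish at $(0,0)$.

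Next I would compute the complex Hessian of $u$ itself at the origin. A direct calculation shows that the $z\bar z$ block still carries a factor of $\|\zeta\|^2$ and hence vanishes at $(0,0)$; the mixed $z\bar\zeta$ block carries a factor of $z_j$ or $\bar\zeta_\beta$ and vanishes as well; and the $\zeta\bar\zeta$ block reduces to $e^{\mu\|z\|^2}\delta_{\alpha\beta}$, giving $\delta_{\alpha\beta}$ at the origin. In block form, $\mathrm{Hess}_{\mathbb{C}} u\big|_0 = \mathrm{diag}(0_n, I_m)$ as an $(n+m)\times(n+m)$ Hermitian matrix.

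Both parts now follow from the chain rule
\[
\partial_j\partial_{\bar k}\bigl(F\circ u\bigr)\big|_0 = F''(0)\,(\partial_j u)(\partial_{\bar k}u)\big|_0 + F'(0)\,\partial_j\partial_{\bar k}u\big|_0,
\]
in which the $F''$ term drops out because the first partials of $u$ vanish at the origin. For part (1), I would apply this to $F(t) = -(n+m+1)\log(1-t)$, noting $F'(0) = n+m+1$, and add the Hessian of $m\mu\|z\|^2 = \log e^{m\mu\|z\|^2}$, which equals $\mathrm{diag}(m\mu I_n, 0_m)$; the sum is $\mathrm{diag}(m\mu I_n,\,(n+m+1)I_m)$, which is positive definite. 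For part (2), I would take $F(t) = \log P(t)$, well defined near $0$ since $P(0) \neq 0$, so that $F'(0) = P'(0)/P(0)$; the hypothesis $P(0)P'(0) > 0$ is precisely $F'(0) > 0$, whence $\mathrm{Hess}_{\mathbb{C}}\bigl(\log P(u)\bigr)\big|_0 = F'(0)\,\mathrm{diag}(0_n, I_m)$ is positive semi-definite.

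I do not anticipate any serious obstacle: the entire argument collapses to the observation that the first derivatives of $u$ vanish at the origin, which kills the $F''$ contribution in the chain rule and leaves the Hessian of $F\circ u$ as a scalar multiple of the simple matrix $\mathrm{Hess}_{\mathbb{C}} u\big|_0$. The only thing requiring real care is keeping the block structure straight and correctly translating the sign hypothesis in (2) into positivity of $F'(0)$.
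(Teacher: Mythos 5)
Your proof is correct and takes essentially the same approach as the paper: a direct computation of the complex Hessian at the origin, where the paper evaluates each second derivative term by term and you package the same computation into a chain-rule identity for $F\circ u$ with $u=e^{\mu\|z\|^2}\|\zeta\|^2$. The key observation in both cases is identical -- the first partials of $u$ vanish at the origin, so only the $\zeta\bar\zeta$ block survives with coefficient $F'(0)$ -- and your translation of $P(0)P'(0)>0$ into $F'(0)>0$ matches the paper's use of $c_1/c_0\ge 0$.
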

\begin{proof}
For every $1\leq i,k\leq n$, a direct computation shows that
\begin{equation*}
\begin{aligned}
&\left.\frac{\partial^2}{\partial z_i\partial\bar z_k}(\log(1-e^{\mu{\|z\|}^2}\|\zeta\|^2))\right|_{(z,\zeta)=0}=
\left.\frac{\partial^2}{\partial z_i\partial\bar\zeta_k}(\log(1-e^{\mu{\|z\|}^2}\|\zeta\|^2))\right|_{(z,\zeta)=0}=0;\\
&\left.\frac{\partial^2}{\partial\zeta_i\partial\bar\zeta_k}(\log(1-e^{\mu{\|z\|}^2}\|\zeta\|^2))\right|_{(z,\zeta)=0}=-\delta^k_i,
\end{aligned}
\end{equation*}where $\delta^k_i$ is the Kronecker delta. By observing the complex Hessian of the first term, we obtain that
\begin{equation*}
\left.\frac{\partial^2}{\partial z_i\partial\bar z_k}(\log(e^{m\mu{\|z\|}^2}))\right|_{(z,\zeta)=0}=m\mu\delta^k_i.
\end{equation*}Moreover, the other elements vanish at the origin. Thus, we complete the proof of the first assertion. For the second assertion, we define the polynomial $P$ by setting 
\begin{equation*}
P(e^{\mu{\|z\|}^2}{\|\zeta\|}^2)=\sum_{s=0}^{\mathrm{deg}P}c_s{(e^{\mu\|z\|^2}{\|\zeta\|}^2)}^s.
\end{equation*}Together with our assumption on the positiveness of $c_0$ and $c_1$, we deduce that
\begin{equation*}
\left.\frac{\partial^2}{\partial\zeta_i\partial\bar\zeta_k}\log(P(e^{\mu\|z\|^2}\|\zeta\|^2))\right|_{(z,\zeta)=0}=\frac{c_1}{c_0}\delta^k_i\geq0;
\end{equation*}the other elements of the complex Hessian vanish at the origin. Hence, the proof of the second assertion is complete.
\end{proof}
Combining Lemma \ref{lem:positive} and Lemma \ref{lem:hess}, we know that $T_{D_{n,m}} (0,0)$ is positive definite.
Thus we have checked the conditions \eqref{B} and \eqref{T} for our domain $D_{n,m}$.
The above argument, together with Theorem \ref{th:CUT}, implies Cartan's theorem for $D_{n,m}$:
\begin{theorem}
If $\varphi \in \mbox{Aut}(D_{n,m}) $ with $\varphi (0)=0$, then $\varphi$ is linear.
\end{theorem}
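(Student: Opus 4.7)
The plan is to apply Theorem~\ref{th:CUT} to $D_{n,m}$. Since $D_{n,m}$ is a circular domain containing the origin, it suffices to verify the two conditions \eqref{B} and \eqref{T} for the Bergman kernel $K_{D_{n,m}}$; once these are in hand, the theorem gives the conclusion at once.

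For condition \eqref{B}, I would specialize the explicit Bergman kernel formula to $(z,\zeta) = (z',\zeta') = 0$, reducing it to $\mu^n \pi^{-n-m}\frac{d^m}{dt^m} Li_{-n}(t)|_{t=0}$. Using the identity \eqref{Anm}, this equals $\mu^n \pi^{-n-m} A_{n,m}(0)$, which is positive by Lemma~\ref{lem:positive}.

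For condition \eqref{T}, I would decompose the diagonal restriction of the logarithm of the kernel using \eqref{Anm}:
\begin{equation*}
\log K_{D_{n,m}}((z,\zeta),(z,\zeta)) = C + \bigl[m\mu\|z\|^2 - (n+m+1)\log(1-e^{\mu\|z\|^2}\|\zeta\|^2)\bigr] + \log A_{n,m}(e^{\mu\|z\|^2}\|\zeta\|^2),
\end{equation*}
where $C$ is a real constant. The complex Hessian at the origin of the bracketed term is positive definite by Lemma~\ref{lem:hess}(1), while the Hessian contribution of the final term is positive semi-definite by Lemma~\ref{lem:hess}(2) applied to $P = A_{n,m}$. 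Summing these yields positive definiteness of $T_{D_{n,m}}(0,0)$.

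The only step that requires a small check is that the hypothesis $P(0)P'(0) > 0$ of Lemma~\ref{lem:hess}(2) actually holds for $P = A_{n,m}$. This is precisely where the full force of Lemma~\ref{lem:positive} — positivity of \emph{all} coefficients, not merely the constant term — is used, since it guarantees both $A_{n,m}(0) > 0$ and $A_{n,m}'(0) > 0$. There is no real obstacle beyond this bookkeeping; with \eqref{B} and \eqref{T} verified, Theorem~\ref{th:CUT} immediately implies that $\varphi$ is linear.
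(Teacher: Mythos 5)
Your proposal is correct and follows essentially the same route as the paper: verify condition \eqref{B} via the explicit kernel formula, \eqref{Anm}, and Lemma~\ref{lem:positive}, verify condition \eqref{T} by splitting $\log K_{D_{n,m}}$ into the term handled by Lemma~\ref{lem:hess}(1) plus $\log A_{n,m}$ handled by Lemma~\ref{lem:hess}(2), and then invoke Theorem~\ref{th:CUT}. Your explicit check that $A_{n,m}(0)>0$ and $A_{n,m}'(0)>0$ (so that the hypothesis $P(0)P'(0)>0$ holds) is exactly the point the paper leaves implicit when it says ``combining Lemma~\ref{lem:positive} and Lemma~\ref{lem:hess}.''
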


\section{Automorphism Group}\label{sec:Auto}
As an application of Section \ref{sec:Cartan} and Section \ref{sec:FBH}, we determine the automorphism group of $D_{n,m}$.
Put $\mathcal U=\{ (z,0) \in \mathbb C^n  \times \mathbb C^m \} \subset D_{n,m}$.
We start with the following observation:
\begin{lemma}\label{lem:inv}
For any $\varphi \in \mbox{Aut} (D_{n,m}) $, the space $\mathcal U$ is invariant under the mapping $\varphi$ (i.e. $\varphi(\mathcal U)\subset \mathcal U$) and $\varphi|_{\mathcal U}\in\mathrm{Aut}(\mathcal U)$.
\end{lemma}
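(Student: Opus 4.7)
The plan is to characterize $\mathcal{U}$ in a biholomorphically invariant way, namely as the set of points of $D_{n,m}$ through which a non-constant entire curve $f \colon \mathbb{C} \to D_{n,m}$ passes. Once this characterization is established, the inclusion $\varphi(\mathcal{U}) \subseteq \mathcal{U}$ is automatic for every $\varphi \in \mathrm{Aut}(D_{n,m})$, and applying the same argument to $\varphi^{-1}$ promotes this to an equality, from which $\varphi|_{\mathcal{U}} \in \mathrm{Aut}(\mathcal{U})$ follows since $\varphi^{-1}|_{\mathcal{U}}$ is a holomorphic two-sided inverse.

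The key observation is that the second projection $\pi_2 \colon D_{n,m} \to \mathbb{C}^m$, $(z,\zeta)\mapsto\zeta$, has image contained in the unit ball, since the defining inequality forces $\|\zeta\|^2 < e^{-\mu\|z\|^2}\leq 1$. If $f \colon \mathbb{C} \to D_{n,m}$ is any entire curve, then $\pi_2\circ f$ is bounded, hence constant by Liouville's theorem; write $f(w)=(g(w),\zeta_0)$. If $\zeta_0\neq 0$, membership $f(w)\in D_{n,m}$ forces $\mu\|g(w)\|^2 < -\log\|\zeta_0\|^2$, and a second application of Liouville shows that $g$ is constant as well. Consequently, any non-constant entire curve in $D_{n,m}$ has $\zeta\equiv 0$ and is contained in $\mathcal{U}$.

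Conversely, every point $(z_0,0)\in\mathcal{U}$ lies on a non-constant entire curve in $D_{n,m}$, for instance $w\mapsto(z_0+wv,0)$ for any nonzero $v\in\mathbb{C}^n$. Thus $\mathcal{U}$ is exactly the locus of points of $D_{n,m}$ admitting a non-constant entire curve through them, a property manifestly preserved by biholomorphisms. This yields the desired invariance and completes the proof.

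I do not foresee a substantial obstacle: the argument is essentially the observation that $D_{n,m}$ fails to be Kobayashi-hyperbolic precisely along $\mathcal{U}$, and the two applications of Liouville extract this from the explicit form of the defining inequality. The only point to be careful about is the second Liouville step, where one must use $\zeta_0\neq 0$ so that the implicit bound on $\|g(w)\|$ is finite.
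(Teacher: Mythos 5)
Your proof is correct, and it takes a genuinely different route from the paper's. The paper argues directly on the restriction $\varphi|_{\mathcal U}$: writing $\varphi(z,0)=(\varphi_1(z),\varphi_2(z))$, it applies Liouville to the bounded entire components of $\varphi_2$ to conclude $\varphi_2$ is constant, then argues that $\varphi_1$ is a non-constant (hence unbounded) entire map, so the defining inequality $\|\varphi_2\|^2<e^{-\mu\|\varphi_1(z)\|^2}$ forces the constant $\varphi_2$ to vanish. You instead characterize $\mathcal U$ intrinsically as the locus of points of $D_{n,m}$ lying on a non-constant entire curve $\mathbb C\to D_{n,m}$ (i.e.\ the non-hyperbolicity locus), using Liouville twice on an arbitrary entire curve: once on the second projection, and once on the first projection under the hypothesis $\zeta_0\neq 0$. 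Your second Liouville application replaces the paper's unboundedness argument for $\varphi_1$, and in doing so sidesteps the need to justify that $\varphi_1$ is non-constant (which in the paper implicitly rests on injectivity of $\varphi$). Your approach also buys two things cleanly: the invariance $\varphi(\mathcal U)\subseteq\mathcal U$ is automatic for \emph{any} automorphism because the characterization is manifestly biholomorphically invariant (non-constancy of $\varphi\circ f$ following from injectivity of $\varphi$), and the equality $\varphi(\mathcal U)=\mathcal U$ --- hence the claim $\varphi|_{\mathcal U}\in\mathrm{Aut}(\mathcal U)$, which the paper asserts without explicit argument --- drops out by applying the same inclusion to $\varphi^{-1}$. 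The paper's version is slightly more economical in that it only examines $\varphi$ on $\mathcal U$ itself, but yours is the more structural argument.
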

\begin{proof}
Put $\varphi(z,0)=(\varphi_1(z), \varphi_2(z))$ and $\varphi_2(z)=(\varphi_{21}(z),\ldots, \varphi_{2m}(z)  )$. 
Then we have
\begin{align*}
\sum_{i=1}^m  |\varphi_{2i}(z)| ^2= \| \varphi_2 (z)  \|^2 < e^{-\mu \|\varphi_1 (z) \|^2 }\leq 1. 
\end{align*}
It follows that $\varphi_{2i}$ is bounded and holomorphic in $\mathbb C^n$ for all $1\leq i \leq m$.
Then Liouville's Theorem implies that $\varphi_{2i}$ is constant.
Since $\varphi_1$ is a non-constant entire function, $\varphi_1$ is unbounded. Therefore, there exists a sequence $\{z_k \}_{k\in \mathbb N}$ such that $\varphi_1(z_k)=\infty $ as $k \rightarrow \infty$.
Therefore $\varphi_2$ must be identically equal to zero. This proves $\varphi(\mathcal U)\subset \mathcal U$ and $\varphi|_{\mathcal U}\in\mathrm{Aut}(\mathcal U)$.
\end{proof}
Next lemma is the key point of our argument.
\begin{lemma}\label{lem:linear2}
If $\varphi \in \mbox{Aut} (D_{n,m})$ is linear, then $\varphi (z,\zeta) = (U z, U' \zeta )$ for some $U \in U(n), U' \in  U(m) $.
\end{lemma}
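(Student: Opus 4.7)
The plan is to use Lemma \ref{lem:inv} to put $\varphi$ in block form, and then to exploit that a linear biholomorphism of $D_{n,m}$ is automatically a homeomorphism of $\mathbb C^{n+m}$ carrying $\partial D_{n,m}$ onto itself. Writing
\[
\varphi(z,\zeta) = (Az + B\zeta,\, Cz + D\zeta)
\]
with $A,B,C,D$ blocks of the appropriate size, Lemma \ref{lem:inv} gives $\varphi(z,0) = (Az, Cz) \in \mathcal U$, forcing $C = 0$ and $A \in GL(n,\mathbb C)$. It remains to prove that $A$ is unitary, $D$ is unitary, and $B = 0$.

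Since $\varphi$ is a linear bijection of $\mathbb C^{n+m}$ preserving $D_{n,m}$, it preserves $\partial D_{n,m}$, yielding the functional identity
\[
\|D\zeta\|^2 = e^{-\mu\|Az+B\zeta\|^2}\quad\text{whenever}\quad\|\zeta\|^2 = e^{-\mu\|z\|^2}.
\]
I would parametrize a boundary point by $z = t\hat z$, $\zeta = s\hat\zeta$ with unit vectors $\hat z \in \mathbb C^n$, $\hat\zeta \in \mathbb C^m$ and $s = e^{-\mu t^2/2}$. Taking logarithms rearranges the identity into
\[
\log\|D\hat\zeta\|^2 = -\mu t^2(\|A\hat z\|^2 - 1) - 2\mu ts\,\mathrm{Re}\langle A\hat z, B\hat\zeta\rangle - \mu s^2\|B\hat\zeta\|^2.
\]
As $t\to\infty$, the super-polynomial decay of $s = e^{-\mu t^2/2}$ kills the last two terms while the left-hand side is bounded in $t$; matching the $t^2$-coefficient forces $\|A\hat z\| = \|\hat z\|$ for every unit $\hat z$, hence $A = U \in U(n)$. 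The surviving identity then gives $\log\|D\hat\zeta\|^2 = 0$ for every unit $\hat\zeta$, so $D = U' \in U(m)$.

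Finally, to conclude $B = 0$, I would exploit the $S^1$-symmetry of $\partial D_{n,m}$: if $(z,\zeta) \in \partial D_{n,m}$, so is $(z, e^{i\theta}\zeta)$. Substituting $e^{i\theta}\hat\zeta$ for $\hat\zeta$ in the reduced identity
\[
2\mu ts\,\mathrm{Re}(e^{-i\theta}\langle U\hat z, B\hat\zeta\rangle) + \mu s^2\|B\hat\zeta\|^2 = 0
\]
and varying $\theta$ separates the two terms: the first must vanish identically, giving $\langle U\hat z, B\hat\zeta\rangle = 0$ for all unit $\hat z$, so $B\hat\zeta = 0$ and hence $B = 0$. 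The main obstacle is the asymptotic matching that extracts the unitarity of $A$ and $D$; once this is in place, the vanishing of $B$ follows mechanically from the circular symmetry.
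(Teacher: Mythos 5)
Your proof is correct, and although it starts from the same two ingredients as the paper's proof --- the block decomposition forced by Lemma~\ref{lem:inv} and the fact that a linear automorphism, being an invertible linear map of $\mathbb C^{n+m}$, satisfies $\varphi(\partial D_{n,m})=\partial D_{n,m}$ --- the way you exploit the boundary identity is genuinely different and noticeably more economical. The paper establishes the three conclusions (the $z$-block unitary, the $\zeta$-block unitary, the mixed block zero) by three separate arguments: unitarity of the $z$-block is extracted by evaluating the boundary identity along an eigenvector of the $\zeta$-block and comparing orders in a big-O expansion; unitarity of the $\zeta$-block by a scaling $t\to 0^{+}$ along boundary rays combined with the Cauchy--Schwarz inequality; and vanishing of the mixed block by rerunning the first argument over an orthonormal eigenbasis of the (by then unitary) $\zeta$-block. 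Your single parametrization $z=t\hat z$, $\zeta=e^{-\mu t^2/2}\hat\zeta$ with $t\to\infty$ delivers both unitarity statements at once, since the cross terms carry the super-polynomially small factor $s=e^{-\mu t^2/2}$ while the left-hand side is independent of $t$, so the coefficient of $t^2$ must vanish and the residual constant must be $0$; the $S^1$-invariance of $\partial D_{n,m}$ in the $\zeta$-variable then kills the mixed block with no reference to eigenvectors at all (indeed, once $\operatorname{Re}\langle U\hat z,B\hat\zeta\rangle=0$ is forced, the surviving term $\mu s^2\|B\hat\zeta\|^2=0$ already gives $B\hat\zeta=0$ directly). The only points worth spelling out in a written version are that $\|D\hat\zeta\|\neq 0$, so the logarithm is legitimate (this is immediate from the boundary identity itself, since its right-hand side is strictly positive), and that a norm-preserving linear map is unitary by polarization; neither is a gap.
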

\begin{proof}
Since $\varphi$ is linear, the map $\varphi$ can be written as a matrix form; namely,
\begin{equation*}
\varphi(z,\zeta)=
\begin{pmatrix}
A&C\\
D&B\\
\end{pmatrix}
\begin{pmatrix}
z\\
\zeta\\
\end{pmatrix},
\end{equation*}where $A\in M_{n\times n}(\mathbb{C})$, $B\in M_{m\times m}(\mathbb{C})$, $C\in M_{n\times m}(\mathbb{C})$, and $D\in M_{m\times n}(\mathbb{C})$. We note that $\varphi|_{\mathcal U}\in\mathrm{Aut}(\mathcal U)$. Applying the preceding lemma, we have
\begin{equation*}
D=O;\quad\mathrm{det}A\neq0;\quad\mathrm{det}B\neq0,
\end{equation*}where $O$ is the $(n\times m)$ zero matrix. Then it follows that $\varphi(z,\zeta)=(Az+C\zeta,B\zeta)$ for all $(z,\zeta)\in D_{n,m}$.

Now we shall show that $A\in U(n)$. Let us choose an unit eigenvector $\tilde e$ of $B$ and the associated eigenvalue $\lambda$ of $B$ such that $B\tilde e=\lambda\tilde e$. Moreover, the linearity of $\varphi$ ensures that $\varphi(\partial D_{n.m})=\partial D_{n,m}$ as a set, it follows that
\begin{equation}\label{equation:unitary_1}
{|\lambda|}^2t^2=e^{-\mu{\|Az+tC(\tilde e)\|}^2}
\end{equation}for all $(z,t\tilde e)\in\partial D_{n,m}$. By inserting $z=0$ into \eqref{equation:unitary_1}, we obtain that
\begin{equation*}
{|\lambda|}^2=e^{-\mu{\|C(\tilde e)\|}^2}; 
\end{equation*}hence, \eqref{equation:unitary_1} can be rewritten as 
\begin{equation}\label{equation:unitary_2}
e^{-\mu\paren{{\|z\|}^2+{\|C(\tilde e)\|}^2}}=
e^{-\mu\paren{{\|Az\|}^2+e^{-\mu{\|z\|}^2}{\|C(\tilde e)\|}^2+2\langle Az,e^{-\frac{\mu}{2}{\|z\|}^2}C(\tilde e)\rangle}}
\end{equation}for all $z\in\mathbb{C}^n$. Since $\mu>0$, \eqref{equation:unitary_2} implies that
\begin{equation}\label{big_O_condition}
({\|Az\|}^2-{\|z\|}^2)+(e^{-\mu{\|z\|}^2}-1){\|C(\tilde e)\|}^2+2(1+\mathbf{O}({\|z\|}^2))\langle Az,C(\tilde e)\rangle=0
\end{equation}or
\begin{equation*}
\langle Az,C(\tilde e)\rangle=\mathbf{O}({\|z\|}^2)
\end{equation*}for all $z\in\mathbb{C}^n$, which ensures that $\langle Az,C(\tilde e)\rangle\equiv0$ on $\mathbb{C}^n$. Then  we obtain $C(\tilde e)=0$ which concludes that 
\begin{equation*}
{\|z\|}^2={\|Az\|}^2
\end{equation*}for all $z\in\mathbb{C}^n$.

We next show that $B\in U(m)$. To verify this assertion, we now fix an arbitrary $\zeta\in\mathbb{C}^m$ with $0<\|\zeta\|<1$. Let $(z,\zeta)\in\partial D_{n,m}$ and $(z_t,t\zeta)\in\partial D_{n,m}$ for any  $0<t<\frac{1}{\|\zeta\|}$. Then these settings imply that 
\begin{equation}\label{unitary_3}
e^{-\mu{\|z_t\|}^2}={\|t\zeta\|}^2=t^2{\|\zeta\|}^2=t^2e^{-\mu{\|z\|}^2}.
\end{equation}Moreover, if we let $\tilde z=Az$ for $(z,\zeta)\in\partial D_{n,m}$, then ${\|z\|}^2={\|A^{-1}\tilde z\|}^2={\|\tilde z\|}^2$. This shows that $(\tilde z,\zeta)\in\partial D_{n,m}$. Then it follows that
\begin{equation}\label{unitary_4}
\begin{split}
{\|B\zeta\|}^2
&=e^{-\mu{\|\tilde z+C\zeta\|}^2}\\
&=e^{-\mu\paren{{\|\tilde z\|}^2+{\|C\zeta\|}^2+2\langle\tilde z,C\zeta\rangle}}\\
&={\|\zeta\|}^2e^{-\mu\paren{{\|C\zeta\|}^2+2\langle\tilde z,C\zeta\rangle}}
\end{split}
\end{equation}for all $(\tilde z,\zeta)\in\mathbb{C}^n\times\mathbb{C}^m$ with ${\|\zeta\|}^2=e^{-\mu{\|\tilde z\|}^2}$. On combining \eqref{unitary_3} with \eqref{unitary_4}, we deduce that
\begin{equation}\label{unitary_5}
\begin{split}
\frac{{\|B\zeta\|}^2}{{\|\zeta\|}^2}=\frac{{\|B(t\zeta)\|}^2}{{\|t\zeta\|}^2}
&=e^{-\mu({\|C(t\zeta)\|}^2+2\langle\tilde z_t,C(t\zeta)\rangle)}\\
&=e^{-\mu(t^2{\|C\zeta\|}^2+2t\langle\tilde z_t,C\zeta\rangle)}
\end{split}
\end{equation}for all $0<t<\frac{1}{\|\zeta\|}$. Moreover, \eqref{unitary_3} ensures that
\begin{equation*}
{\|\tilde z_t\|}^2=\frac{1}{\mu}\log(\frac{1}{t^2})+{\|\tilde z\|}^2
=\frac{1}{\mu}\log(\frac{1}{t^2})+{\|z\|}^2;
\end{equation*}hence by the Cauchy-Schwarz's inequality we have
\begin{equation}\label{unitary_6}
\begin{split}
{|2t\langle\tilde z_t,C\zeta\rangle|}^2
&\leq4t^2{\|\tilde z_t\|}^2{\|C\zeta\|}^2\\
&=4t^2(\frac{1}{\mu}\log(\frac{1}{t^2})+{\|z\|}^2){\|C\zeta\|}^2\rightarrow0
\end{split}
\end{equation}as $t\rightarrow0^+$. Since $t^2{\|C\zeta\|}^2$ obviously tends to $0$ as $t\rightarrow0^+$, \eqref{unitary_5} and \eqref{unitary_6} together imply that $\|B\zeta\|=\|\zeta\|$ for all $\zeta\in\mathbb{C}^m$ with $0<\|\zeta\|<1$. Because $B$ is linear, it holds for all $\zeta\in\mathbb{C}^m$, and hence $B\in U(m)$. 

To complete our proof, it suffices to show that $C=O$. Since $B$ is unitary, we consider the following setting: Let $\{\lambda_j\}$ be the set of eigenvalues of $B$ and $\{e_j\}$ the pertaining set of orthonormal eigenvectors of $B$ such that $Be_j=\lambda_j e_j$ for each $j$. Furthermore, the set $\{e_j\}$ can be a basis of $\mathbb{C}^m$. The remaining proof is similar in spirit to \eqref{equation:unitary_2} and \eqref{big_O_condition}. Similar arguments to each $e_j$ show that
\begin{equation*}
\langle Az,C(e_j)\rangle=\mathbf{O}({\|z\|}^2)
\end{equation*}for all $z\in\mathbb{C}^n$. Thus we obtain $C(e_j)=0$ for each $e_j$, which completes the proof.
\end{proof}
Now we are ready to prove our main theorem.
\begin{theorem}
The automorphism group $\mbox{Aut} (D_{n,m})$ for $D_{n,m}$ is generated by the following maps:
\begin{align*}
&\varphi_U: (z,\zeta) \mapsto (Uz, \zeta) , \quad U \in U(n);\\
&\varphi_{U'}: (z,\zeta) \mapsto (z, U'\zeta) , \quad U' \in U(m);\\
&\varphi_{v}: (z,\zeta) \mapsto (z+v, e^{-\mu v^* z- \frac{\mu}{2} \|v\|^2 } \zeta)  ,\quad v\in \mathbb C^n.
\end{align*}
\end{theorem}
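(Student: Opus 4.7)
The plan is to verify that each of the three families lies in $\mathrm{Aut}(D_{n,m})$, and then, given an arbitrary $\varphi \in \mathrm{Aut}(D_{n,m})$, to pre-compose with a suitable translation $\varphi_{-a}$ so as to reduce to an automorphism fixing the origin, at which point the machinery of Sections~\ref{sec:Cartan}--\ref{sec:FBH} applies directly.

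First, I would check the three families are in $\mathrm{Aut}(D_{n,m})$ by direct calculation. The maps $\varphi_U$ and $\varphi_{U'}$ trivially preserve the defining inequality $\|\zeta\|^2 < e^{-\mu\|z\|^2}$ by unitarity. For $\varphi_v$ the central identity is
\[
\bigl|e^{-\mu v^* z - \tfrac{\mu}{2}\|v\|^2}\bigr|^2 e^{-\mu\|z\|^2} \;=\; e^{-2\mu\,\mathrm{Re}\langle z,v\rangle - \mu\|v\|^2 - \mu\|z\|^2} \;=\; e^{-\mu\|z+v\|^2},
\]
so $\varphi_v$ preserves the defining inequality, and a routine composition computation gives $\varphi_v^{-1} = \varphi_{-v}$, so $\varphi_v$ is bijective onto $D_{n,m}$.

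Next, for an arbitrary $\varphi \in \mathrm{Aut}(D_{n,m})$, Lemma~\ref{lem:inv} gives $\varphi(0,0) \in \mathcal U$, say $\varphi(0,0) = (a,0)$ with $a \in \mathbb{C}^n$. Setting $\psi := \varphi_{-a} \circ \varphi$ yields $\psi \in \mathrm{Aut}(D_{n,m})$ with $\psi(0,0) = (0,0)$. The Cartan-type theorem at the end of Section~\ref{sec:FBH} then forces $\psi$ to be linear, and Lemma~\ref{lem:linear2} pins down its form as $\psi(z,\zeta) = (Uz,U'\zeta)$ for some $U \in U(n)$ and $U' \in U(m)$. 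Consequently
\[
\varphi \;=\; \varphi_a \circ \psi \;=\; \varphi_a \circ \varphi_U \circ \varphi_{U'},
\]
which expresses $\varphi$ as a composition of the stated generators.

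The substantive difficulties — Cartan rigidity on an unbounded non-hyperbolic domain and the classification of origin-fixing linear automorphisms — are already handled in the preceding sections, so this final theorem is essentially an assembly of previous pieces. The only genuinely new ingredient is the verification that the $\varphi_v$ family sits inside $\mathrm{Aut}(D_{n,m})$, which is routine, so I do not expect any serious obstacle beyond bookkeeping.
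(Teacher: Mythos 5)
Your proposal is correct and follows the paper's own argument essentially verbatim: verify the generators (with the same exponential identity for $\varphi_v$), use Lemma~\ref{lem:inv} to place $\varphi(0,0)$ in $\mathcal U$, compose with a translation to fix the origin, and then invoke the Cartan-type theorem together with Lemma~\ref{lem:linear2}. The only difference is the immaterial ordering $\varphi_a\circ\varphi_U\circ\varphi_{U'}$ versus the paper's $\varphi_{v_0}\circ\varphi_{U'}\circ\varphi_U$.
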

\begin{proof}
It is easy to see that the above mappings belong to $\mathrm{Aut}(D_{n,m})$. Let $\varphi$ be an automorphism of $D_{n,m}$ and put $\varphi(0,0)=(v_0,v_0')$.
Then Lemma \ref{lem:inv} implies $v_0'=0$.
Thus we obtain that $\varphi_{-v_0} \circ \varphi$ preserves the origin. Namely, we have $\varphi_{-v_0} \circ \varphi(0)=0$.
This, together with Theorem \ref{th:CUT} and Lemma \ref{lem:linear2}, implies that $\varphi_{-v_0} \circ \varphi = \varphi_{U'} \circ \varphi_U$.
Hence we conclude that  $\varphi = \varphi_{v_0} \circ \varphi_{U'} \circ \varphi_U$. This proves the theorem.
\end{proof}

\begin{remark}
The preceding theorem tells us that $\mathrm{dim}_{\mathbb{R}}\mathrm{Aut}(D_{n,m})=n^2+m^2+2n$. If we let $m=1$, then $\mathrm{dim}_{\mathbb{R}}\mathrm{Aut}(D_{n,1})=\paren{n+1}^2$. Moreover, $D_{n,1}\backslash\{(z,0)\in\mathbb{C}^n\times\mathbb{C}\}$ can be one of the models in the classification of connected hyperbolic manifolds of dimension $k\geq2$ with $\mathrm{dim}_{\mathbb{C}}\mathrm{Aut}=k^2$; precisely, $D_{n,1}\backslash\{(z,0)\in\mathbb{C}^n\times\mathbb{C}\}$ belongs to $D_{r,\theta}:=\{(\tilde z,z_{k})\in\mathbb{C}^{k-1}\times\mathbb{C};\;r e^{\theta{\|\tilde z\|}^2}<|z_{k}|<e^{\theta{\|\tilde z\|}^2}\}$, with either $\theta=1,\; 0<r<1$, or $\theta=-1,\;r=0$~(cf.~\cite{Isaev}).
\end{remark}

\acknowledgement{The authors would like to express their gratitude to Professor Hideyuki Ishi and Professor Kang-Tae Kim for helpful discussion. Especially, the authors thank the anonymous referee for valuable comments on this paper.}

\end{document}